\documentclass[reqno,a4paper,11pt, twoside]{amsart}
\usepackage{amsfonts,amssymb,epsfig,amstext,amsthm,mathpazo}
\usepackage[english]{babel}
\usepackage{geometry}
\usepackage{graphics}
\usepackage{subcaption}
\usepackage[font=footnotesize,labelfont=bf]{caption}
\usepackage[labelsep=period]{caption}
\usepackage{setspace}
\usepackage{enumerate}
\usepackage{latexsym, graphicx}
\usepackage{float}
\newfont{\bb}{msbm10 at 12pt}

\usepackage{fancyhdr}
\pagestyle{fancy}
\parindent0cm
\textwidth = 424pt
\textheight = 609pt
\oddsidemargin = 18pt
\headheight = 18pt
\topmargin = 0pt
\headsep = 25pt
\marginparwidth = 54pt
\marginparsep = 11pt
\footskip = 30pt 
\marginparpush = 5pt
\hoffset = 0pt 
\voffset = 0pt
\paperwidth = 597pt 
\paperheight = 845pt
\newcounter{num} 
\setcounter{num}{0} 
\newcommand{\Fg}[1][]{\thenum}
\newtheorem{defi}{Definition}[section]
\newtheorem{lem}{Lemma}[section]

\newtheorem{thm}{Theorem}[section]

\setcounter{equation}{0}

\newtheorem{rem}[thm]{Remark}

\numberwithin{equation}{section}

\chead[\tiny{J. Das, K. Halder, S. Roy and A. Bhattacharyya}]
{\tiny{$\ast$-Conformal Einstein Solitons on $\mathcal{N}(k)$-Contact Metric Manifolds}}
\lhead{}
\rhead{}
 
\setcounter{page}{1}
\begin{document}

\title{$\ast$-conformal Einstein Solitons on $\mathcal{N}(k)$-contact metric Manifolds}
\author{Jhantu Das, Kalyan Halder, Soumendu Roy and Arindam Bhattacharyya}
\address{Department of Mathematics, Sidho-Kanho-Birsha University, Purulia, West Bengal-723104, India.\newline \indent Division of Mathematics, School of Advanced Sciences, Vellore Institute of Technology, Chennai-600127, India\newline
\indent Department of Mathematics, Jadavpur University, Kolkata, West Bengal-700032, India.\newline}
\email{dasjhantu54@gmail.com, drkalyanhalder@gmail.com, soumendu.roy@vit.ac.in,\newline bhattachar1968@yahoo.co.in}

\subjclass[2010]{Primary 53D15; Secondary 53C44; 35Q51.}

\keywords{$\mathcal{N}(k)$-contact metric manifolds, $\ast$-Ricci tensor, $\ast$-conformal Einstein soliton, conformal vector field.}
\maketitle

\begin{abstract} 
The main goal of this paper is devoted to $\mathcal{N}(k)$-contact metric manifolds admitting $\ast$-conformal Einstein soliton and also $\ast$-conformal gradient Einstein soliton. {In this settings the nature of the manifold, and the potential vector field, potential function of solitons are characterized, and conditions for the $\ast$-conformal Einstein soliton to be expanding, steady, or shrinking are also given.} Furthermore, the nature of the potential vector field is evolved when the metric $g$ of $\mathcal{N}(k)$-contact metric manifold satisfies $\ast$-conformal gradient Einstein soliton. Finally, an illustrative example of a $\mathcal{N}(k)$-contact metric manifold is discussed to verify our findings.  
\end{abstract}

\newcommand{\E}{\operatorname{E}}
\newcommand{\var}{\operatorname{var}}
\newcommand{\sd}{\operatorname{sd}}
\newcommand{\cov}{\operatorname{cov}}
\newcommand{\diff}{\mathop{}\!\mathrm{d}}

\section{Introduction}\label{sec1}
\quad A Riemannian metric $g$ defined on a smooth manifold $\mathcal{M}$ is called an Einstein soliton if there exists a real constant $\Omega$ and a smooth vector field $\mathcal{V}$ on $\mathcal{M}$ such that the following equation holds, \cite{4}  

\begin{equation}\label{1.1} Ric+\frac{1}{2}\pounds_{\mathcal{V}}g+(\Omega-\frac{r}{2})g=0,\end{equation}\\where $\pounds_{\mathcal{V}}g$ denotes the Lie derivative of the metric $g$ along the direction of $\mathcal{V}$ and $Ric$, $r$ are the symmetric Ricci tensor of type $(0,2)$, and scalar curvature of the smooth manifold $\mathcal{M}$, respectively. Naturally,  $\mathcal{V}$ and $\Omega$ are said to be the potential vector field and the soliton constant, respectively. The potential vector field and the soliton constant of the soliton play vital roles in determining the nature of the soliton. An Einstein soliton is said to be shrinking if $\Omega<0$, steady if $\Omega=0$ and expanding if $\Omega>0$. Moreover if $\mathcal{V}$ is of gradient type i.e., $\mathcal{V}=grad(f)$, for some real valued smooth function $f$ defined on $\mathcal{M}$, then the soliton (\ref{1.1}) is called gradient Einstein soliton \cite{4}. The Einstein flow is an evolution equation for metrics $g$ on a Riemannian manifold $\mathcal{M}$ defined by \begin{align*}\frac{\partial}{\partial{t}}({g(t)})+2Ric(g(t))=rg(t),\end{align*} where $g(t)$ is a one-parameter family of metrics on $\mathcal{M}$. 
Einstein solitons are self-similar solutions to the Einstein flow.

\quad In $2021$, S. Roy et al. \cite{new} introduced the concept of conformal Einstein soliton as a generalization of the classical Einstein soliton (\ref{1.1}). According to \cite{new}, a Riemannian metric $g$ defined on a smooth manifold $\mathcal{M}$ of dimension $(2m+1)$ is called a conformal Einstein soliton if there exists a real constant $\Omega$ and a smooth vector field $\mathcal{V}$ on $\mathcal{M}$ such that the following equation holds,

\begin{equation}\label{1.2} Ric+\frac{1}{2}\pounds_{\mathcal{V}}g+\{\Omega-\frac{r}{2}+(\frac{p}{2}+\frac{1}{2m+1})\}g=0,\end{equation}\\ where $p$ is a scalar non-dynamical field. Here, $p$ is called conformal pressure.\\

\quad In $2002$, T. Hamada \cite{11} defined the $\ast$-Ricci tensor of type $(0,2)$ on real hypersurfaces of complex space forms by 
\begin{align*} Ric^{\ast}(\mathcal{X},\mathcal{Y})=g(Q^{\ast}\mathcal{X},\mathcal{Y})=\frac{1}{2}[trace(\varphi\otimes\mathcal{R}(\mathcal{X},\varphi\mathcal{Y}))]\end{align*}\\for any smooth vector field $\mathcal{X}, \mathcal{Y}$ on $\mathcal{M}$, where $\varphi$ is a $(1,1)$-tensor field and $\mathcal{R}$ is the Riemannian curvature tensor of $\mathcal{M}$, and $Q^{\ast}$ is the $\ast$-Ricci curvature operator. The $\ast$-scalar curvature of $\mathcal{M}$ is denoted by $r^{\ast}$ and is defined by $r^{\ast}=trace(Q^{\ast})$. A Riemannian manifold $(\mathcal{M},g)$ is called $\ast$-Ricci flat if $Ric^{\ast}$ vanishes identically. Over the years, several notion related to the $\ast$-Ricci tensor were initiated. In \cite{12}, the authors initiated the notion of $\ast$-Ricci soliton and widely studied by many authors \cite{8,9,16} and others. The $\ast$-Ricci soliton equation was given by,

\begin{equation}\label{1.3}
Ric^{\ast}+\frac{1}{2}\pounds_{\mathcal{V}}g+\Omega{g}=0,\end{equation}\\ where $\Omega$ is a real constant and the tensor $Ric^{\ast}$ of type $(0,2)$ is symmetric in $\mathcal{X}$ and $\mathcal{Y}$ on $\mathcal{M}$. Moreover, if the potential vector field $\mathcal{V}$ on $\mathcal{M}$ is of gradient type, i.e., $\mathcal{V}=grad(f)$, for some real valued smooth function $f$ defined on $\mathcal{M}$, then the soliton (\ref{1.3}) is called a $\ast$-conformal gradient Ricci soliton. In this paper, we introduce the notion of $\ast$-conformal Einstein soliton defined as:

\begin{defi}\label{defn1.1} A Riemannian or pseudo-Riemannian manifold $(\mathcal{M},g)$ of dimension $(2m+1)$ is said to admit $\ast$-conformal Einstein ($\ast$-CE) soliton of the type $(g,\mathcal{V},\Omega)$ if 

\begin{equation}\label{1.4} Ric^{\ast}+\frac{1}{2}\pounds_{\mathcal{V}}g+\{\Omega-\frac{r^{\ast}}{2}+(\frac{p}{2}+\frac{1}{2m+1})\}g=0,\end{equation}\\ where $\Omega$ is a real constant, provided the tensor $Ric^{\ast}$ of type $(0,2)$ is symmetric in $\mathcal{X}$ and $\mathcal{Y}$ on $\mathcal{M}$.
 Moreover, if $\mathcal{V}=grad(f)=\nabla{f}$, for some real valued smooth function $f$ defined on $\mathcal{M}$, where $\nabla$ is the Riemannian connection defined on $\mathcal{M}$, then the soliton (\ref{1.4}) is called $\ast$-conformal gradient Einstein ($\ast$-CGE) soliton of the type $(g,\nabla{f},\Omega)$. In this occasion, the ($\ast$-CGE) soliton of the type $(g,\nabla{f},\Omega)$ is given by 

\begin{equation}\label{1.5} Ric^{\ast}+\nabla\nabla{f}+\{\Omega-\frac{r^{\ast}}{2}+(\frac{p}{2}+\frac{1}{2m+1})\}g=0.\end{equation}

The $\ast$-conformal Einstein ($\ast$-CE) soliton or $\ast$-conformal gradient Einstein ($\ast$-CGE) soliton is said to be shrinking if $\Omega<0$, steady if $\Omega=0$ and expanding if $\Omega>0$.
\end{defi}

\quad In the present paper we study $\ast$-conformal Einstein ($\ast$-CE) soliton and $\ast$-conformal gradient Einstein ($\ast$-CGE) soliton on $\mathcal{N}(k)$-contact metric manifolds. The paper is organized in the following way: After introduction, we have discussed some essential definitions and curvature formulas related to $\mathcal{N}(k)$-contact metric manifolds, which are contained in section \ref{sec2}. In section \ref{sec3}, we have studied $\ast$-conformal Einstein ($\ast$-CE) soliton on $\mathcal{N}(k)$-contact metric manifold and characterized the behaviour of the manifold, potential vector field of the soliton. Section \ref{sec4} deals with $\mathcal{N}(k)$-contact metric manifold whose metric $g$ satisfies $\ast$-conformal gradient Einstein ($\ast$-CGE) soliton and obtain some impotent results. Finally, we present an example to justify our results that we obtain in this work.

\section{Preliminaries}\label{sec2} 

\quad A smooth manifold $\mathcal{M}$ of dimension $(2m+1)$ is said to have an almost contact structure or $(\varphi,\zeta,\eta)$ structure \cite{3} if $\mathcal{M}$ permits a $(1,1)$-tensor field $\varphi$, a smooth vector field $\zeta$ (called the characteristic vector field), and an $1$-form $\eta$ satisfying
\begin{equation}\label{2.1}\varphi^2(\mathcal{X})=-\mathcal{X}+\eta(\mathcal{X})\zeta,\hspace{1.0cm}\eta(\zeta)=1\end{equation}\\for any smooth vector field $\mathcal{X}$ on $\mathcal{M}$. Then from (\ref{2.1}) it follows that 

\begin{equation}\label{2.2}\varphi(\zeta)=0,\hspace{0.5cm}\eta(\varphi\mathcal{X})=0.\end{equation}

\quad If $\mathcal{M}$ with an almost contact structure $(\varphi,\zeta,\eta)$ admits a Riemannian metric $g$ such that

\begin{equation}\label{2.3}g(\varphi\mathcal{X},\varphi\mathcal{Y})=g(\mathcal{X},\mathcal{Y})-\eta(\mathcal{X})\eta(\mathcal{Y}),\hspace{0.5cm} g(\mathcal{X},\zeta)=\eta(\mathcal{X})\end{equation}\\for any smooth vector fields $\mathcal{X}, \mathcal{Y}$ on $\mathcal{M}$, then $(\varphi,\zeta,\eta, g)$ is called an almost contact metric structure. A manifold having almost contact metric structure is called an almost contact metric manifold. From (\ref{2.3}), it can be easily deduced that  
\begin{equation}\label{2.4}g(\varphi\mathcal{X},\mathcal{Y})=-g(\mathcal{X},\varphi\mathcal{Y}).\end{equation}

\quad An almost contact metric structure $(\varphi,\zeta,\eta, g)$ becomes a contact metric structure if 

\begin{equation}\label{2.5}g(\varphi\mathcal{X},\mathcal{Y})=d\eta(\mathcal{X},\mathcal{Y})\end{equation}

for any smooth vector fields $\mathcal{X}, \mathcal{Y}$ on $\mathcal{M}$, where $d$ denotes the exterior differentiation. 

\quad On a contact metric manifold, the $(1,1)$-tensor field $h$ is defined as $h=\frac{1}{2}\pounds_{\zeta}\varphi$, where $\pounds_{\zeta}\varphi$ denotes the Lie derivative of $\varphi$ by $\zeta$. The $(1,1)$-tensor field $h$ is symmetric and satisfies
\begin{equation}\label{2.6}h\varphi+\varphi{h}=0,\hspace{0.5cm} trace(h)=trace(\varphi{h})=0,\hspace{0.5cm}h\zeta=0.\end{equation} Also, we have 
\begin{equation}\label{2.7} \nabla_{\mathcal{X}}\zeta=-\varphi{\mathcal{X}}-\varphi{h}\mathcal{X}\end{equation}\\for any smooth vector field $\mathcal{X}$ on $\mathcal{M}$, where  $\nabla$ is the Levi-Civita connection of $g$ on $\mathcal{M}$.\\

\quad In \cite{b}, Blair et al. defined the notion of ${(k,\mu)}$-nullity distribution on contact metric manifold as follows:

\begin{equation}\label{a}\begin{array}{cc}
\mathcal{N}(k,\mu):q\rightarrow{\mathcal{N}_q}{(k,\mu)}=\{\mathcal{W}\in{\mathcal{T}_q}(\mathcal{M}): \mathcal{R}(\mathcal{X},\mathcal{Y})\mathcal{W}=(kI+\mu{h})[g(\mathcal{Y},\mathcal{W})\mathcal{X}\\\\-g(\mathcal{X},\mathcal{W})\mathcal{Y}]\},\end{array}\end{equation}

 where $({k},\mu)\in\mathbb{R}^2$, ${I}$ is an identity mapping and ${\mathcal{T}_q(\mathcal{M}})$ denotes the tangent vector space of $\mathcal{M}$ at point $q$. If the characteristic vector field $\xi$ belongs to ${(k,\mu)}$-nullity distribution $\mathcal{N}(k,\mu)$, then we call a contact metric manifold as ${(k,\mu)}$-contact metric manifold. Also, the contact metric manifold $\mathcal{M}$ is called $\mathcal{N}(k)$-contact metric manifold \cite{18} if it satisfies $(\ref{a})$ with $\mu=0$ and $\xi$ belongs to ${k}$-nullity distribution $\mathcal{N}(k)$. In the case of $\mathcal{N}(k)$-contact metric manifold, the ${k}$-nullity distribution is given by

$$\mathcal{N}(k):q\rightarrow{\mathcal{N}_q}{(k)}=\{\mathcal{W}\in{\mathcal{T}_q}(\mathcal{M}): \mathcal{R}(\mathcal{X},\mathcal{Y})\mathcal{W}=k[g(\mathcal{Y},\mathcal{W})\mathcal{X}-g(\mathcal{X},\mathcal{W})\mathcal{Y}]\}.$$

\quad For a $(2m+1)$-dimensional $\mathcal{N}(k)$-contact metric manifold the following relations are satisfied \cite{1,3}:
\begin{equation}\label{2.8}h^2=(k-1)\varphi^2,\end{equation}

\begin{equation}\label{2.9}\mathcal{R}(\mathcal{X},\mathcal{Y})\zeta=k\{\eta(\mathcal{Y})\mathcal{X}-\eta(\mathcal{X})\mathcal{Y}\},\end{equation}

\begin{equation}\label{2.10}\mathcal{R}(\zeta,\mathcal{X})\mathcal{Y}=k\{g(\mathcal{X},\mathcal{Y})\zeta-\eta(\mathcal{Y})\mathcal{X}\},\end{equation}

\begin{equation}\label{2.11}(\nabla_{\mathcal{X}}\eta)\mathcal{Y}=g(\mathcal{X}+h\mathcal{X},\varphi\mathcal{Y}),\end{equation}

\begin{equation}\label{2.12}(\nabla_{\mathcal{X}}\varphi)\mathcal{Y}=g(\mathcal{X}+h\mathcal{X},\mathcal{Y})\zeta-\eta(\mathcal{Y})(\mathcal{X}+h\mathcal{X}),\end{equation}

\begin{equation}\label{2.13}\begin{array}{cc}(\nabla_{\mathcal{X}}\varphi{h})\mathcal{Y}=[g(\mathcal{X},h\mathcal{Y})-(k-1)g(\mathcal{X},\mathcal{Y}-\eta(\mathcal{Y})\zeta)]\zeta\\\\ \hspace{1.5cm}+\eta(\mathcal{Y})[h\mathcal{X}-(k-1)(\mathcal{X}-\eta(\mathcal{X})\zeta)],\end{array}\end{equation}

\begin{equation}\label{2.14}\begin{array}{cc}{Ric}(\mathcal{X},\mathcal{Y})=2(m-1)\{g(\mathcal{X},\mathcal{Y})+g(h\mathcal{X},\mathcal{Y})\}+{2\{mk-m+1\}\eta(\mathcal{X})\eta(\mathcal{Y})},\end{array}\end{equation}

\begin{equation}\label{2.15}{Ric}(\mathcal{X},\zeta)=2mk\eta({\mathcal{X}})\end{equation}\\for any smooth vector fields $\mathcal{X}, \mathcal{Y}$ on $\mathcal{M}$, where $\mathcal{R}$, $Ric$ are the Riemann curvature tensor, and symmetric Ricci tensor of type $(0,2)$ of $\mathcal{M}$, respectively, and $\mathsf{{m}\geq{1}}$. For more details about the $\mathcal{N}(k)$-contact metric manifolds, we cite \cite{10,13,14} and the references therein.

\begin{defi}\label{defn2.1}
On a $(2m+1)$-dimensional Riemannian manifold $(\mathcal{M},g)$, a smooth vector field $\mathcal{V}$ is said to be conformal \cite{20,21} if

\begin{equation}\label{2.16}\pounds_{\mathcal{V}}g=2\rho{g}\end{equation}\\holds for some real valued  smooth function $\rho$ difined on $\mathcal{M}$. The function $\rho$ is also known as conformal coefficient. In particular, the conformal vector field with a vanishing conformal coefficient reduces to the Killing vector field.
\end{defi} 

\begin{defi}\label{defn2.2}
A smooth vector field $\mathcal{V}$ on a contact metric manifold $\mathcal{M}$ is said to be an infinitesimal contact transformation \cite{19} if it preserves the contact form $\eta$, i.e., there exists a real valued  smooth function $\psi$ on $\mathcal{M}$ that satisfies  

\begin{equation}\label{2.17}\pounds_{\mathcal{V}}\eta=\psi{g},\end{equation}\\ where $\pounds_{\mathcal{V}}\eta$ denotes the Lie derivative of $\eta$ by $\mathcal{V}$.
In particular, if the smooth function $\psi$ vanishes identically in (\ref{2.17}), then $\mathcal{V}$ is said to be a strict infinitesimal contact transformation.
\end{defi}

\section{{$\mathcal{N}(k)$-contact metric manifold admitting $\ast$-conformal Einstein soliton}}\label{sec3}

\quad This section is devoted to the study of $\mathcal{N}(k)$-contact metric manifold admitting a ($\ast$-CE) soliton of the type $(g,\mathcal{V},\Omega)$. To produce our prime theorems, we need the following Lemmas:
\begin{lem}\label{lem3.1}
(\cite{2}) A $(2m+1)$-dimensional contact metric manifold $\mathcal{M}$ satisfying $\mathcal{R}(\mathcal{X},\mathcal{Y})\zeta=0$ for any smooth vector fields $\mathcal{X},\mathcal{Y}$ on $\mathcal{M}$ is locally isometric to the Riemannian product of a flat manifold of dimension $(m+1)$ and a manifold of dimension $m$ whose positive curvature is equal to $4$, that is, $\mathbb{E}^{m+1}(0)\times{\mathcal{S}^m(4)}$ for $m>1$ and flat for $m=1$. 
\end{lem}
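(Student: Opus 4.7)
The plan is to recognize that the hypothesis $\mathcal{R}(\mathcal{X},\mathcal{Y})\zeta=0$ places $\zeta$ in the nullity distribution $\mathcal{N}(0)$, so $\mathcal{M}$ is an $\mathcal{N}(0)$-contact metric manifold and identity (\ref{2.8}) collapses to $h^{2}=-\varphi^{2}$. Since $h$ is symmetric with $h\zeta=0$, its eigenvalues on the contact distribution $\ker\eta$ are $\pm 1$; the anti-commutation $h\varphi+\varphi h=0$ from (\ref{2.6}) shows that $\varphi$ exchanges the eigendistributions $D_{+}=\{\mathcal{X}\in\ker\eta : h\mathcal{X}=\mathcal{X}\}$ and $D_{-}=\{\mathcal{X}\in\ker\eta : h\mathcal{X}=-\mathcal{X}\}$, so each has dimension $m$.

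Next I would compute $\nabla_{\mathcal{X}}\zeta$ from (\ref{2.7}) on each eigendistribution: for $\mathcal{X}\in D_{-}$, $\nabla_{\mathcal{X}}\zeta = -\varphi\mathcal{X}-\varphi h\mathcal{X} = -\varphi\mathcal{X}+\varphi\mathcal{X}=0$, while for $\mathcal{X}\in D_{+}$, $\nabla_{\mathcal{X}}\zeta=-2\varphi\mathcal{X}$. Combined with $\nabla_{\zeta}\zeta=0$, this shows $D_{-}\oplus\langle\zeta\rangle$ is exactly the kernel of the map $\mathcal{X}\mapsto\nabla_{\mathcal{X}}\zeta$. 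Integrability of each distribution would then be checked by evaluating $g([\mathcal{X},\mathcal{Y}],\mathcal{Z})$ for $\mathcal{X},\mathcal{Y}$ in one eigendistribution and $\mathcal{Z}$ in the other, using (\ref{2.12})--(\ref{2.13}) specialized to $k=0$; the same identities also imply that $\nabla$ preserves each eigenspace, so both distributions are parallel and their leaves are totally geodesic.

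Once the tangent bundle splits as $(D_{-}\oplus\langle\zeta\rangle)\oplus D_{+}$ into two parallel, orthogonal, totally geodesic distributions, the local de Rham decomposition theorem yields a local isometry onto a Riemannian product. On the $(m+1)$-dimensional leaves tangent to $D_{-}\oplus\langle\zeta\rangle$, $\nabla\zeta$ vanishes identically and the hypothesis $\mathcal{R}(\mathcal{X},\mathcal{Y})\zeta=0$ together with the product structure forces every sectional curvature of the leaf to vanish, so it is locally $\mathbb{E}^{m+1}(0)$. On the $m$-dimensional leaves tangent to $D_{+}$, a direct computation of $\mathcal{R}(\mathcal{X},\varphi\mathcal{X})\varphi\mathcal{X}$ using (\ref{2.12}) with $k=0$ and the Gauss equation pins the sectional curvature at the constant value $4$, identifying the leaf with $\mathcal{S}^{m}(4)$. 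For $m=1$ the factor $D_{+}$ is one-dimensional and hence intrinsically flat, making the whole manifold flat.

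The main obstacle is verifying that $D_{\pm}$ are in fact parallel rather than merely smooth: one must differentiate $h$ and use (\ref{2.13}) with $k=0$ to show that $(\nabla_{\mathcal{Z}}h)\mathcal{X}$ commutes properly with the eigenprojections of $h$, ruling out mixing between $D_{+}$ and $D_{-}$ under covariant differentiation. Once this is established the de Rham reduction is automatic and the sectional-curvature identification is a short calculation.
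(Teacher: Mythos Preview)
The paper does not prove this lemma at all: it is quoted verbatim from Blair's 1977 paper \cite{2} and used as a black box. So there is no ``paper's own proof'' to compare against; what you have outlined is essentially Blair's original argument (eigenspace decomposition of $h$ under $h^{2}=-\varphi^{2}$, verification that $D_{-}\oplus\langle\zeta\rangle$ and $D_{+}$ are complementary integrable totally geodesic distributions, then de Rham splitting and curvature identification). For the purposes of this paper you could simply cite \cite{2} as the authors do.

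One small slip in your sketch: in the last curvature step you propose to compute $\mathcal{R}(\mathcal{X},\varphi\mathcal{X})\varphi\mathcal{X}$ for $\mathcal{X}\in D_{+}$, but since $\varphi$ interchanges $D_{+}$ and $D_{-}$, the vector $\varphi\mathcal{X}$ lies in $D_{-}$, so this is a mixed sectional curvature and not the intrinsic curvature of the $D_{+}$-leaf. To identify the $D_{+}$ factor with $\mathcal{S}^{m}(4)$ you must compute $\mathcal{R}(\mathcal{X},\mathcal{Y})\mathcal{Y}$ for $\mathcal{X},\mathcal{Y}\in D_{+}$; in Blair's argument this comes from the explicit formula for the full curvature tensor of a contact metric manifold with $\mathcal{R}(\cdot,\cdot)\zeta=0$, not from a $\varphi$-sectional curvature. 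Apart from this, your plan is sound and matches the standard proof.
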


\begin{lem}\label{lem3.2}
(\cite{9}) On a $(2m+1)$-dimensional $\mathcal{N}(k)$-contact metric manifold $\mathcal{M}$, the $\ast$-Ricci tensor $Ric^{\ast}$ of type $(0,2)$ is given by 

\begin{equation}\label{3.1} Ric^{\ast}(\mathcal{X},\mathcal{Y})=k\{\eta(\mathcal{X})\eta(\mathcal{Y})-g(\mathcal{X},\mathcal{Y})\} \end{equation}\\for any smooth vector fields $\mathcal{X}, \mathcal{Y}$ on $\mathcal{M}$.
\end{lem}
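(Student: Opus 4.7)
The plan is to compute $Ric^{\ast}(\mathcal{X},\mathcal{Y})$ directly from its defining trace, by choosing an adapted orthonormal basis, separating the $\zeta$-direction, and then using the specific $\mathcal{N}(k)$-curvature identities (\ref{2.9})--(\ref{2.12}) to reduce the $\ast$-Ricci trace to the ordinary Ricci trace plus controllable corrections.

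First, I would fix a local orthonormal $\varphi$-basis $\{e_1,\dots,e_{2m},\zeta\}$ on $\mathcal{M}$, with $e_{m+i}=\varphi e_i$ for $i=1,\dots,m$, adapted to the splitting of the tangent space into $\zeta^{\perp}$ and the line through $\zeta$, and write
\begin{equation*}
Ric^{\ast}(\mathcal{X},\mathcal{Y})=\frac{1}{2}\sum_{j=1}^{2m+1}g\bigl(\varphi\,\mathcal{R}(\mathcal{X},\varphi\mathcal{Y})e_j,\,e_j\bigr).
\end{equation*}
The $\zeta$-term falls out at once: by (\ref{2.9}) and $\eta(\varphi\mathcal{Y})=0$ one has $\mathcal{R}(\mathcal{X},\varphi\mathcal{Y})\zeta=-k\,\eta(\mathcal{X})\,\varphi\mathcal{Y}$, and applying $\varphi$ then pairing against $\zeta$ annihilates this contribution.

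For the remaining sum over $\zeta^{\perp}$, I would use (\ref{2.4}) to rewrite each summand as $-g(\mathcal{R}(\mathcal{X},\varphi\mathcal{Y})e_j,\varphi e_j)$ and then exploit the commutator identity
\begin{equation*}
\mathcal{R}(\mathcal{X},\mathcal{Y})\varphi\mathcal{Z}-\varphi\,\mathcal{R}(\mathcal{X},\mathcal{Y})\mathcal{Z}=\bigl(\nabla_{\mathcal{X}}\nabla_{\mathcal{Y}}\varphi-\nabla_{\mathcal{Y}}\nabla_{\mathcal{X}}\varphi-\nabla_{[\mathcal{X},\mathcal{Y}]}\varphi\bigr)\mathcal{Z},
\end{equation*}
whose right-hand side is computable term-by-term from (\ref{2.12}). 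After tracing, this identity reduces the $\ast$-Ricci trace to a combination of the ordinary Ricci trace (known explicitly by (\ref{2.14})) and correction terms involving $h$, $\eta$, and $\zeta$.

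Finally, I would simplify the resulting expression using $h\varphi+\varphi h=0$, $h\zeta=0$ from (\ref{2.6}) and $h^2=(k-1)\varphi^2$ from (\ref{2.8}). The main obstacle is the careful bookkeeping of the $h$-correction terms, and the key observation will be that the $h$-dependent pieces arising from the $\nabla\varphi$-corrections cancel exactly with the $g(h\mathcal{X},\mathcal{Y})$-terms in (\ref{2.14}), leaving the clean $h$-free expression $Ric^{\ast}(\mathcal{X},\mathcal{Y})=k\{\eta(\mathcal{X})\eta(\mathcal{Y})-g(\mathcal{X},\mathcal{Y})\}$.
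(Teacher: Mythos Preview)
The paper does not supply its own proof of this lemma: it is quoted verbatim from \cite{9} and used as a black box, so there is no argument in the present paper to compare your proposal against. What you have written is therefore not an alternative to the paper's proof but a stand-alone attempt at the cited result.

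As an outline your approach is the standard one and is sound in spirit: compute the second covariant derivative of $\varphi$ from (\ref{2.12}) to control $\mathcal{R}(\mathcal{X},\mathcal{Y})\varphi-\varphi\,\mathcal{R}(\mathcal{X},\mathcal{Y})$, trace, and compare with (\ref{2.14}). Two points where the sketch needs tightening before it becomes a proof: first, differentiating (\ref{2.12}) produces $\nabla h$-terms, so you will also need (\ref{2.13}) (or an equivalent identity) and not just (\ref{2.6}) and (\ref{2.8}) to close the computation; second, the ``ordinary Ricci trace'' you allude to is not quite $Ric(\mathcal{X},\varphi\mathcal{Y})$ but a mixed trace of the curvature over the pair $(e_j,\varphi e_j)$, and one has to use the first Bianchi identity (or the symmetry $g(\mathcal{R}(\mathcal{X},\varphi\mathcal{Y})e_j,\varphi e_j)=g(\mathcal{R}(e_j,\varphi e_j)\mathcal{X},\varphi\mathcal{Y})$) to bring it into a form where (\ref{2.14}) applies. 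Once these two steps are written out, the cancellation you anticipate does occur and the result follows.
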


\quad Note that, $\ast$-Ricci tensor ${Ric^{\ast}}$ is not symmetric in general. But from (\ref{3.1}) it is observe that in a $\mathcal{N}(k)$-contact metric manifold, ${Ric^{\ast}}$ is symmetric and hence, the definition \ref{defn1.1} is well defined on $\mathcal{N}(k)$-contact metric manifold of dimension greater than or equal to three. 

\quad Taking $\mathcal{X}=\mathcal{Y}=e_i$ in (\ref{3.1}), where $\{e_i\}_{i=1}^{2m+1}$ is an orthonormal basis of the tangent vector space at each point of $\mathcal{M}$, and summing over $i\in\{1,2,3,...,2m+1\}$ we obtain 

\begin{equation}\label{3.2} r^{\ast}=-2mk.\end{equation}

\begin{lem}\label{lem3.3}
If a $(2m+1)$-dimensional $\mathcal{N}(k)$-contact metric manifold $\mathcal{M}$ admits a ($\ast$-CE) soliton of the type $(g,\mathcal{V},\Omega)$, then the $\ast$-Ricci tensor ${Ric^{\ast}}$ satisfies 

\begin{equation}\begin{array}{cc}\label{3.3}(\nabla_{\mathcal{W}}Ric^{\ast})(\mathcal{X},\mathcal{Y})-(\nabla_{\mathcal{X}}Ric^{\ast})(\mathcal{Y},\mathcal{W})-(\nabla_{\mathcal{Y}}Ric^{\ast})(\mathcal{X},\mathcal{W}) =\\\\2k\{g(\mathcal{W},\varphi\mathcal{Y})\eta(\mathcal{X})+g(\mathcal{W},\varphi\mathcal{X})\eta(\mathcal{Y})-g(h\mathcal{X},\varphi\mathcal{Y})\eta(\mathcal{W})\} 
\end{array}\end{equation}\\for any smooth vector fields $\mathcal{X,Y,W}$ on $\mathcal{M}$.
\end{lem}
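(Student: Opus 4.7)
The identity is, in fact, a purely tensorial identity valid on any $\mathcal{N}(k)$-contact metric manifold; the soliton hypothesis is not needed to derive it and will matter only when the lemma is combined with the defining equation (\ref{1.4}) in the sequel. The plan is therefore to verify (\ref{3.3}) by a direct differentiation of the closed-form expression for $Ric^{\ast}$ supplied by Lemma \ref{lem3.2}.

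First I would covariantly differentiate $Ric^{\ast}(\mathcal{X},\mathcal{Y})=k\{\eta(\mathcal{X})\eta(\mathcal{Y})-g(\mathcal{X},\mathcal{Y})\}$. Because $k$ is a real constant and $\nabla g=0$, only the $\eta$-terms contribute, so the Leibniz rule yields
\begin{equation*}
(\nabla_{\mathcal{W}}Ric^{\ast})(\mathcal{X},\mathcal{Y})=k\bigl\{(\nabla_{\mathcal{W}}\eta)(\mathcal{X})\,\eta(\mathcal{Y})+\eta(\mathcal{X})\,(\nabla_{\mathcal{W}}\eta)(\mathcal{Y})\bigr\}.
\end{equation*}
Relation (\ref{2.11}) then converts each $\nabla\eta$ into $g(\,\cdot\,+h\,\cdot\,,\varphi\,\cdot\,)$, producing an explicit expression involving $\mathcal{W}$, $h\mathcal{W}$, and $\varphi$ applied to $\mathcal{X}$ or $\mathcal{Y}$. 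I would do the same for the two other terms on the left-hand side of (\ref{3.3}).

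Next I would collect the resulting six terms and group them by the factor $\eta(\mathcal{W})$, $\eta(\mathcal{X})$ or $\eta(\mathcal{Y})$ that they carry. The algebraic identities I need are the skew-symmetry $g(\varphi\mathcal{X},\mathcal{Y})=-g(\mathcal{X},\varphi\mathcal{Y})$ from (\ref{2.4}), the symmetry of $h$, and the anticommutation $h\varphi+\varphi h=0$ from (\ref{2.6}). Combining the last two gives the crucial cross-identity $g(h\mathcal{A},\varphi\mathcal{B})=g(h\mathcal{B},\varphi\mathcal{A})$, which forces the $h$-contributions to cancel in the coefficients of $\eta(\mathcal{X})$ and $\eta(\mathcal{Y})$ but to reinforce in the coefficient of $\eta(\mathcal{W})$. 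After these cancellations the coefficients reduce respectively to $2g(\mathcal{W},\varphi\mathcal{Y})$, $2g(\mathcal{W},\varphi\mathcal{X})$ and $-2g(h\mathcal{X},\varphi\mathcal{Y})$, which is exactly the right-hand side of (\ref{3.3}) after pulling out the factor $k$.

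The main obstacle is purely bookkeeping: tracking signs across the three permutations and verifying that the $h$-terms pair up correctly through $h\varphi=-\varphi h$ and the symmetry of $h$. No curvature identity beyond Lemma \ref{lem3.2} and no use of the soliton equation is required, so the argument is a short, closed computation.
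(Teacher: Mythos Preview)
Your proposal is correct and follows essentially the same route as the paper: covariantly differentiate the closed form (\ref{3.1}) from Lemma~\ref{lem3.2}, use $\nabla g=0$ so only the $\eta$-terms survive, apply (\ref{2.11}) to rewrite $(\nabla\eta)$, and then simplify the three permuted expressions with (\ref{2.4}) and (\ref{2.6}). Your observation that the soliton hypothesis plays no role in this lemma is also correct---the paper's own proof never uses it either.
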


\begin{proof} Recalling the equation (\ref{3.1}) and covariantly differentiating it along an arbitrary smooth vector field $\mathcal{W}$ on $\mathcal{M}$ yields 

\begin{equation}\label{3.4}\nabla_{\mathcal{W}}Ric^{\ast}(\mathcal{X},\mathcal{Y})=k\{(\nabla_{\mathcal{W}}\eta(\mathcal{X}))\eta(\mathcal{Y})+(\nabla_{\mathcal{W}}\eta(\mathcal{Y}))\eta(\mathcal{X})-\nabla_{\mathcal{W}}g(\mathcal{X},\mathcal{Y})\}. 
\end{equation}\\We know that, $$(\nabla_{\mathcal{W}}Ric^{\ast})(\mathcal{X},\mathcal{Y})=\nabla_{\mathcal{W}}Ric^{\ast}(\mathcal{X},\mathcal{Y})-Ric^{\ast}(\nabla_{\mathcal{W}}\mathcal{X},\mathcal{Y})-Ric^{\ast}(\mathcal{X},\nabla_{\mathcal{W}}\mathcal{Y}).$$ \\Using (\ref{3.1}) and (\ref{3.4}) in the forgoing equation infers that 

\begin{equation}\begin{array}{rcl}\label{3.5}(\nabla_{\mathcal{W}}Ric^{\ast})(\mathcal{X},\mathcal{Y})=k\{((\nabla_{\mathcal{W}}\eta)(\mathcal{X}))\eta(\mathcal{Y})+((\nabla_{\mathcal{W}}\eta)(\mathcal{Y}))\eta(\mathcal{X})\} \\\\= k\{g(\mathcal{W}+h\mathcal{W},\varphi\mathcal{X})\eta(\mathcal{Y})+g(\mathcal{W}+h\mathcal{W},\varphi\mathcal{Y})\eta(\mathcal{X})\},
\end{array}\end{equation}\\ where we have applied (\ref{2.12}). Similarly from (\ref{3.5}), we obtained 

\begin{equation}\begin{array}{rcl}\label{3.6}
(\nabla_{\mathcal{X}}Ric^{\ast})(\mathcal{Y},\mathcal{W})=k\{g(\mathcal{X}+h\mathcal{X},\varphi\mathcal{Y})\eta(\mathcal{W})+g(\mathcal{X}+h\mathcal{X},\varphi\mathcal{W})\eta(\mathcal{Y})\},
\end{array}\end{equation}\\and \begin{equation}\begin{array}{rcl}\label{3.7}
(\nabla_{\mathcal{Y}}Ric^{\ast})(\mathcal{X},\mathcal{W})=k\{g(\mathcal{Y}+h\mathcal{Y},\varphi\mathcal{X})\eta(\mathcal{W})+g(\mathcal{Y}+h\mathcal{Y},\varphi\mathcal{W})\eta(\mathcal{X})\}.
\end{array}\end{equation}\\Finally with the help of (\ref{3.5})-(\ref{3.7}) and taking reference of (\ref{2.4}) and (\ref{2.6}), we complete the proof.
\end{proof}

\begin{thm}\label{thm3.3}
If a $(2m+1)$-dimensional $\mathcal{N}(k)$-contact metric manifold $\mathcal{M}$ admits a ($\ast$-CE) soliton of the type $(g,\mathcal{V},\Omega)$, then we have following:\\${\bf(i)}$ If $m>1$, then $\mathcal{M}$ is locally isometric to $\mathbb{E}^{m+1}(0)\times{\mathcal{S}^m(4)}$ and $\mathcal{M}$ is flat for $m=1$.\\${\bf(ii)}$ The manifold $\mathcal{M}$ is $\ast$-Ricci flat.\\${\bf(iii)}$ The potential vector field $\mathcal{V}$ is conformal,\\provided that $\Omega+mk+(\frac{p}{2}+\frac{1}{2m+1})\neq{0}$.
\end{thm}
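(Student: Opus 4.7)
The plan is to reduce all three conclusions to the single claim $k=0$. Indeed: part (ii) is equivalent to $k=0$ because by Lemma \ref{lem3.2} the tensor $Ric^{\ast}(\mathcal{X},\mathcal{Y}) = k\{\eta(\mathcal{X})\eta(\mathcal{Y}) - g(\mathcal{X},\mathcal{Y})\}$ vanishes identically iff $k=0$ (otherwise $\eta\otimes\eta = g$, which is impossible); part (i) then follows from (\ref{2.9}) via Lemma \ref{lem3.1}; and part (iii) follows because, once $k=0$, the soliton equation collapses to $\pounds_{\mathcal{V}}g = -2\lambda g$ with $\lambda := \Omega+mk+(\frac{p}{2}+\frac{1}{2m+1})\neq 0$ by hypothesis, making $\mathcal{V}$ a conformal vector field with coefficient $-\lambda$.

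First I would substitute Lemma \ref{lem3.2} and $r^{\ast} = -2mk$ from (\ref{3.2}) into the soliton equation (\ref{1.4}) to obtain the explicit identity
\[
(\pounds_{\mathcal{V}}g)(\mathcal{X},\mathcal{Y}) = 2(k-\lambda)g(\mathcal{X},\mathcal{Y}) - 2k\,\eta(\mathcal{X})\eta(\mathcal{Y}).
\]
Next, I would apply the classical Yano identity
\[
2g((\pounds_{\mathcal{V}}\nabla)(\mathcal{X},\mathcal{Y}),\mathcal{W}) = (\nabla_{\mathcal{X}}\pounds_{\mathcal{V}}g)(\mathcal{Y},\mathcal{W}) + (\nabla_{\mathcal{Y}}\pounds_{\mathcal{V}}g)(\mathcal{X},\mathcal{W}) - (\nabla_{\mathcal{W}}\pounds_{\mathcal{V}}g)(\mathcal{X},\mathcal{Y})
\]
to the soliton-rewritten form $\pounds_{\mathcal{V}}g = -2Ric^{\ast} - 2\lambda g$; the right-hand side is then (up to sign) the cyclic combination already computed in Lemma \ref{lem3.3}, giving the closed form
\[
(\pounds_{\mathcal{V}}\nabla)(\mathcal{X},\mathcal{Y}) = 2k\{\eta(\mathcal{X})\varphi\mathcal{Y} + \eta(\mathcal{Y})\varphi\mathcal{X}\} - 2k\,g(h\mathcal{X},\varphi\mathcal{Y})\zeta.
\]
In particular, setting $\mathcal{Y}=\zeta$ (and using $\varphi\zeta=0$, $h\zeta=0$) gives the compact relation $(\pounds_{\mathcal{V}}\nabla)(\mathcal{X},\zeta) = 2k\,\varphi\mathcal{X}$.

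To force $k=0$, I would compute $(\pounds_{\mathcal{V}}\mathcal{R})(\mathcal{X},\mathcal{Y})\zeta$ in two independent ways and match them. On one side, using $(\pounds_{\mathcal{V}}\mathcal{R})(\mathcal{X},\mathcal{Y})\mathcal{Z} = (\nabla_{\mathcal{X}}\pounds_{\mathcal{V}}\nabla)(\mathcal{Y},\mathcal{Z}) - (\nabla_{\mathcal{Y}}\pounds_{\mathcal{V}}\nabla)(\mathcal{X},\mathcal{Z})$, I differentiate the closed form of $\pounds_{\mathcal{V}}\nabla$ via (\ref{2.11})--(\ref{2.12}) and (\ref{2.7}). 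On the other side, the Lie derivative of the structural equation (\ref{2.9}) can be expanded using $\pounds_{\mathcal{V}}g$ on $\zeta$ (which yields $\pounds_{\mathcal{V}}\eta$ and $\pounds_{\mathcal{V}}\zeta$ algebraically from Step~1). Equating the two expressions should produce a tensorial identity which, after testing on appropriately chosen vectors, isolates a factor $k\cdot(\text{nonvanishing})=0$ and so gives $k=0$. The three conclusions then follow as outlined above.

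The hard part is precisely this final matching step: both expressions for $(\pounds_{\mathcal{V}}\mathcal{R})(\mathcal{X},\mathcal{Y})\zeta$ contain interlocking contributions from $\varphi$, $h$, $h^{2}=(k-1)\varphi^{2}$ (equation (\ref{2.8})) and $\nabla\zeta$, so the bookkeeping is delicate. A natural tactic is to restrict $\mathcal{X},\mathcal{Y}$ to an eigenspace of $h$ orthogonal to $\zeta$ (such eigenspaces exist by symmetry of $h$), which should kill the $h$-coupled terms and peel off a clean algebraic constraint of the form $k=0$.
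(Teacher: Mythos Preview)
Your proposal is correct and follows essentially the same route as the paper: both substitute Lemma~\ref{lem3.2} and $r^{\ast}=-2mk$ into the soliton equation, compute $\pounds_{\mathcal{V}}\nabla$ via Yano's formula and the cyclic identity of Lemma~\ref{lem3.3}, obtain $(\pounds_{\mathcal{V}}\nabla)(\mathcal{X},\zeta)=2k\varphi\mathcal{X}$, and then compare two expressions for a Lie-derived curvature to force $k=0$.

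The only difference is that the paper specializes one step further than you do: instead of matching two formulas for $(\pounds_{\mathcal{V}}\mathcal{R})(\mathcal{X},\mathcal{Y})\zeta$, it matches two formulas for $(\pounds_{\mathcal{V}}\mathcal{R})(\mathcal{X},\zeta)\zeta$, i.e.\ with \emph{both} trailing slots set to $\zeta$. This buys a dramatic simplification: from the covariant derivative of $(\pounds_{\mathcal{V}}\nabla)(\cdot,\zeta)=2k\varphi$ one gets $(\pounds_{\mathcal{V}}\mathcal{R})(\mathcal{X},\zeta)\zeta=0$ directly, while Lie-differentiating $\mathcal{R}(\mathcal{X},\zeta)\zeta=k(\mathcal{X}-\eta(\mathcal{X})\zeta)$ and feeding in the algebraic relations for $\pounds_{\mathcal{V}}\eta$ and $\eta(\pounds_{\mathcal{V}}\zeta)$ (obtained from your Step~1 at $\mathcal{Y}=\zeta$) yields $(\pounds_{\mathcal{V}}\mathcal{R})(\mathcal{X},\zeta)\zeta=2k\lambda(\eta(\mathcal{X})\zeta-\mathcal{X})$. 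Equating gives $k\lambda=0$ immediately, with no $h$-eigenspace argument needed. Your more general plan would also work, but the ``delicate bookkeeping'' you anticipate is an artifact of not having specialized $\mathcal{Y}=\zeta$ early enough; once you do, the $h$-coupled terms drop out on their own via $h\zeta=0$ and $\varphi\zeta=0$.
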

\begin{proof} 

Let us assume that $(g,\mathcal{V},\Omega)$ be a ($\ast$-CE) soliton on a $(2m+1)$-dimensional $\mathcal{N}(k)$-contact metric manifold $\mathcal{M}$. Then, in view of (\ref{3.2}), the soliton equation (\ref{1.4}) becomes 

\begin{equation}\label{3.8} Ric^{\ast}(\mathcal{X,Y})+\frac{1}{2}(\pounds_{\mathcal{V}}g)(\mathcal{X,Y})+\{\Omega+mk+(\frac{p}{2}+\frac{1}{2m+1})\}g(\mathcal{X,Y})=0,\end{equation}\\for any smooth vector fields $\mathcal{X,Y}$ on $\mathcal{M}$.\\\\The covariant derivative of (\ref{3.8}) along an arbitrary smooth vector field $\mathcal{W}$ on $\mathcal{M}$ gives

\begin{equation}\label{3.9}
(\nabla_{\mathcal{W}}\pounds_{\mathcal{V}}g)(\mathcal{X},\mathcal{Y})=-2(\nabla_{\mathcal{W}}Ric^{\ast})(\mathcal{X},\mathcal{Y}).    
\end{equation}\\We have the following well-known commutation formula (see \cite{20}):

\begin{align*}\begin{array}{cc}(\pounds_{\mathcal{V}}\nabla_{\mathcal{X}}g-\nabla_{\mathcal{X}}\pounds_{\mathcal{V}}g-\nabla_{[\mathcal{V},\mathcal{X}]}g)(\mathcal{Y},\mathcal{W})=-g((\pounds_{\mathcal{V}}\nabla)(\mathcal{X},\mathcal{Y}),\mathcal{W})\\\\-g((\pounds_{\mathcal{V}}\nabla)(\mathcal{X},\mathcal{W}),\mathcal{Y}).
\end{array}\end{align*}\\Since $g$ is parallel, i.e. $\nabla{g}=0$, the above commutation formula becomes 

\begin{equation}\label{3.10}
(\nabla_{\mathcal{X}}\pounds_{\mathcal{V}}g)(\mathcal{Y},\mathcal{W})=g((\pounds_{\mathcal{V}}\nabla)(\mathcal{X},\mathcal{Y}),\mathcal{W})+g((\pounds_{\mathcal{V}}\nabla)(\mathcal{X},\mathcal{W}),\mathcal{Y}).   
\end{equation}\\Since $\pounds_{\mathcal{V}}\nabla$ is a symmetric operator, i.e. $(\pounds_{\mathcal{V}}\nabla)(\mathcal{X},\mathcal{Y})=(\pounds_{\mathcal{V}}\nabla)(\mathcal{Y},\mathcal{X})$, then it follows from (\ref{3.10}) that

\begin{equation}\begin{array}{cc}\label{3.11}
g((\pounds_{\mathcal{V}}\nabla)(\mathcal{X},\mathcal{Y}),\mathcal{W})=\frac{1}{2}(\nabla_{\mathcal{X}}\pounds_{\mathcal{V}}g)(\mathcal{Y},\mathcal{W})+\frac{1}{2}(\nabla_{\mathcal{Y}}\pounds_{\mathcal{V}}g)(\mathcal{X},\mathcal{W})-\frac{1}{2}(\nabla_{\mathcal{W}}\pounds_{\mathcal{V}}g)(\mathcal{X},\mathcal{Y}).
\end{array}\end{equation} In view of (\ref{3.9}), the forgoing equation becomes   

\begin{equation}\begin{array}{cc}\label{3.12}
g((\pounds_{\mathcal{V}}\nabla)(\mathcal{X},\mathcal{Y}),\mathcal{W})=(\nabla_{\mathcal{W}}Ric^{\ast})(\mathcal{X},\mathcal{Y})-(\nabla_{\mathcal{X}}Ric^{\ast})(\mathcal{Y},\mathcal{W})-(\nabla_{\mathcal{Y}}Ric^{\ast})(\mathcal{X},\mathcal{W}).
\end{array}\end{equation} Using the lemma \ref{lem3.3} in equation (\ref{3.12}) we get

\begin{equation}\begin{array}{cc}\label{3.13}
g((\pounds_{\mathcal{V}}\nabla)(\mathcal{X},\mathcal{Y}),\mathcal{W})=2k\{g(\mathcal{W},\varphi\mathcal{Y})\eta(\mathcal{X})+g(\mathcal{W},\varphi\mathcal{X})\eta(\mathcal{Y})-g(h\mathcal{X},\varphi\mathcal{Y})\eta(\mathcal{W})\}\end{array}\end{equation} which gives
\begin{equation}\label{3.14}
(\pounds_{\mathcal{V}}\nabla)(\mathcal{X},\mathcal{Y})=2k\{\eta(\mathcal{X})\varphi\mathcal{Y}+\eta(\mathcal{Y})\varphi\mathcal{X}-g(h\mathcal{X},\varphi\mathcal{Y})\zeta\}.\end{equation}\\Replacing $\mathcal{Y}$ by $\zeta$ in (\ref{3.14}) and using (\ref{2.2}) we have

\begin{equation}\label{3.15} 
(\pounds_{\mathcal{V}}\nabla)(\mathcal{X},\zeta)=2k\varphi\mathcal{X}.\end{equation}\\Taking covariant derivative of (\ref{3.15}) along an arbitrary smooth vector field $\mathcal{Y}$ on $\mathcal{M}$ yields

\begin{equation}\label{3.16} 
\nabla_{\mathcal{Y}}(\pounds_{\mathcal{V}}\nabla)(\mathcal{X},\zeta)=2k\nabla_{\mathcal{Y}}(\varphi\mathcal{X}).\end{equation}\\Again from the definition of covariant derivative we have 

\begin{align*}(\nabla_{\mathcal{Y}}\pounds_{\mathcal{V}}\nabla)(\mathcal{X},\zeta)=\nabla_{\mathcal{Y}}(\pounds_{\mathcal{V}}\nabla)(\mathcal{X},\zeta)-(\pounds_{\mathcal{V}}\nabla)(\nabla_{\mathcal{Y}}\mathcal{X},\zeta)-(\pounds_{\mathcal{V}}\nabla)(\mathcal{X},\nabla_{\mathcal{Y}}\zeta).
\end{align*}\\Now using (\ref{3.14})-(\ref{3.16}) in above formula we obtain

\begin{equation}\begin{array}{cc}\label{3.17}(\nabla_{\mathcal{Y}}\pounds_{\mathcal{V}}\nabla)(\mathcal{X},\zeta)=2k\{(\nabla_{\mathcal{Y}}\varphi)\mathcal{X}-\eta(\mathcal{X})\varphi(\nabla_{\mathcal{Y}}\zeta)-\eta(\nabla_{\mathcal{Y}}\zeta)\varphi\mathcal{X}+g(h\mathcal{X},\varphi(\nabla_{\mathcal{Y}}\zeta))\zeta\} .\end{array}\end{equation}Further, with the help of (\ref{2.7}) and (\ref{2.12}), the forgoing equation becomes 

\begin{equation}\begin{array}{cc}\label{3.18}(\nabla_{\mathcal{Y}}\pounds_{\mathcal{V}}\nabla)(\mathcal{X},\zeta)=2k\{g(\mathcal{X},\mathcal{Y})\zeta-g(h\mathcal{X},h\mathcal{Y})\zeta-\eta(\mathcal{Y})(\mathcal{X}+h\mathcal{X})-\eta(\mathcal{X})(\mathcal{Y}+h\mathcal{Y})\\\\+\eta(\mathcal{X})\eta(\mathcal{Y})\zeta\}.
\end{array}\end{equation} From K. Yano \cite{20}, we know the following well-known  formula:

\begin{align*}(\pounds_{\mathcal{V}}\mathcal{R})(\mathcal{X},\mathcal{Y})\mathcal{W}=(\nabla_{\mathcal{X}}\pounds_{\mathcal{V}}\nabla)(\mathcal{Y},\mathcal{W})-(\nabla_{\mathcal{Y}}\pounds_{\mathcal{V}}\nabla)(\mathcal{X},\mathcal{W}).
\end{align*}\\Making use of (\ref{3.18}) in the above formula, we obtain

\begin{equation}\label{3.19}(\pounds_{\mathcal{V}}\mathcal{R})(\mathcal{X},\zeta)\zeta=(\nabla_{\mathcal{X}}\pounds_{\mathcal{V}}\nabla)(\zeta,\zeta)-(\nabla_{\zeta}\pounds_{\mathcal{V}}\nabla)(\mathcal{X},\zeta)=0.\end{equation}\\Next, recalling the equation (\ref{3.8}) and replacing $\zeta$ instead of $\mathcal{Y}$, we get 

$$(\pounds_{\mathcal{V}}g)(\mathcal{X},\zeta)+2\{\Omega+mk+(\frac{p}{2}+\frac{1}{2m+1})\}\eta(\mathcal{X})=0,$$ which gives 
\begin{equation}\label{3.20}
(\pounds_{\mathcal{V}}\eta)\mathcal{X}-g(\mathcal{X},\pounds_{\mathcal{V}}\zeta)+2\{\Omega+mk+(\frac{p}{2}+\frac{1}{2m+1})\}\eta(\mathcal{X})=0.\end{equation}\\After replacing $\zeta$ instead of $\mathcal{X}$ in forgoing equation, it can be easily deduced that

\begin{equation}\label{3.21}
\eta(\pounds_{\mathcal{V}}\zeta)=\{\Omega+mk+(\frac{p}{2}+\frac{1}{2m+1})\}.\end{equation}\\On the other hand, from (\ref{2.9}), we have  

\begin{equation}\label{3.22}
\mathcal{R}{(\mathcal{X},\zeta)}\zeta=k\{\mathcal{X}-\eta(\mathcal{X})\zeta\}.
\end{equation}\\Taking Lie derivative of (\ref{3.22}) along $\mathcal{V}$ and using (\ref{2.10}) and (\ref{3.20})-(\ref{3.22}) yields

\begin{equation}\label{3.23}(\pounds_{\mathcal{V}}\mathcal{R})(\mathcal{X},\zeta)\zeta=2k\{\Omega+mk+(\frac{p}{2}+\frac{1}{2m+1})\}(\eta(\mathcal{X})\zeta-\mathcal{X}).
\end{equation}\\Equating (\ref{3.19}) with (\ref{3.23}) we deduce

\begin{equation}\label{3.24}k\{\Omega+mk+(\frac{p}{2}+\frac{1}{2m+1})\}(\eta(\mathcal{X})\zeta-\mathcal{X})=0.
\end{equation}\\Executing the inner product of (\ref{3.24}) with $\mathcal{Y}$ and using the first equation of (\ref{2.3}) yields

\begin{equation}\label{3.25}
k\{\Omega+mk+(\frac{p}{2}+\frac{1}{2m+1})\}g(\varphi\mathcal{X},\varphi\mathcal{Y})=0.\end{equation}\\
Since the above holds for all smooth vector fields $\mathcal{X,Y}$ on the manifold, and by assumption, ${\Omega+mk+(\frac{p}{2}+\frac{1}{2m+1})}\neq{0}$, we immediately have $k=0$. Using this in equation (\ref{3.1}) infers that $Ric^{\ast}=0$ and hence $\mathcal{N}(k)$-contact metric manifold $\mathcal{M}$ is $\ast$-Ricci flat, and this proves ${\bf(ii)}$. Again from (\ref{2.9}) we get $\mathcal{R}(\mathcal{X},\mathcal{Y})\zeta=0$ for any smooth vector fields $\mathcal{X}, \mathcal{Y}$ on $\mathcal{M}$ and so it follows from Lemma \ref{lem3.1} that $\mathcal{M}$ is locally isometric to $\mathbb{E}^{m+1}(0)\times{\mathcal{S}^m(4)}$ for $m>1$ and flat for $m=1$. This proves ${\bf(i)}$ of Theorem \ref{thm3.3}.\\

Again using the fact that $k=0$ and $Ric^{\ast}=0$ in the soliton equation (\ref{3.8}) we obtain

\begin{equation}\label{3.26}
(\pounds_{\mathcal{V}}g)(\mathcal{X},\mathcal{Y})=-2\{\Omega+(\frac{p}{2}+\frac{1}{2m+1})\}{g(\mathcal{X},\mathcal{Y})},\end{equation} which can be written that 
$$(\pounds_{\mathcal{V}}g)(\mathcal{X},\mathcal{Y})=2\rho{g(\mathcal{X},\mathcal{Y})}$$ for any smooth vector fields $\mathcal{X}, \mathcal{Y}$ on $\mathcal{M}$, where $\rho=-\Omega-(\frac{p}{2}+\frac{1}{2m+1})$. Thus in the sense of definition \ref{defn2.1}, the potential vector field $\mathcal{V}$ is a conformal vector field with conformal
coefficient $-\Omega-(\frac{p}{2}+\frac{1}{2m+1})$. This proves ${\bf(iii)}$ and hence completes the proof.
\end{proof}

\begin{rem}\label{rem3.5}
If $k=0$ and ${\Omega+mk+(\frac{p}{2}+\frac{1}{2m+1})}=0$, then it can be easily deduced that the soliton constant $\Omega=-\frac{p}{2}-\frac{1}{2m+1}$. Therefore, the ($\ast$-CE) soliton is shrinking if $p>{-\frac{2}{2m+1}}$, steady if $p={-\frac{2}{2m+1}}$ or, expanding if $p<{-\frac{2}{2m+1}}$. Also, we obtain from $(\ref{3.26})$ that $(\pounds_{\mathcal{V}}g)(\mathcal{X},\mathcal{Y})=0$ for any smooth vector fields $\mathcal{X}, \mathcal{Y}$ on $\mathcal{M}$ and hence the potential vector field $\mathcal{V}$ is a Killing vector field. 
\end{rem}

\begin{rem}\label{rem3.6}
If $k\neq{0}$ and ${\Omega+mk+(\frac{p}{2}+\frac{1}{2m+1})}=0$, then from (\ref{3.20}) it can be easily deduced that $(\pounds_{\mathcal{V}}\eta)\mathcal{X}=g(\mathcal{X},\pounds_{\mathcal{V}}\zeta)$. Therefore the potential vector field $\mathcal{V}$ will be an infinitesimal contact transformation if $\pounds_{\mathcal{V}}\zeta=\psi\zeta$ for some real valued smooth function $\psi$ defined on $\mathcal{M}$. Also from (\ref{3.21}), we have $\eta(\pounds_{\mathcal{V}}\zeta)=0$. This eventually implies that $\pounds_{\mathcal{V}}\zeta\bot\zeta$. Thus $\pounds_{\mathcal{V}}\zeta\neq{\psi\zeta}$, unless $\psi\equiv{0}.$ 
Hence in the sense of definition (\ref{defn2.2}), $\mathcal{V}$ cannot be an infinitesimal contact transformation on $\mathcal{M}$ but it can be a strict infinitesimal contact transformation on $\mathcal{M}$ if $\pounds_{\mathcal{V}}\zeta=0$.
\end{rem}


\section{{$\mathcal{N}(k)$-contact metric manifold admitting $\ast$-conformal Einstein gradient soliton}}\label{sec4}

In this section, we now concentrate on $\mathcal{N}(k)$-contact metric manifold admitting a ($\ast$-CGE) soliton of the type $(g,\nabla{f},\Omega)$. But before proving our prime theorem in this direction, let us first prove the following Lemma:

\begin{lem}\label{lem4.1}
If a $(2m+1)$-dimensional $\mathcal{N}(k)$-contact metric manifold $\mathcal{M}$ admits a ($\ast$-CGE) soliton of the type $(g,\nabla{f},\Omega)$, then the curvature tensor $\mathcal{R}$ of $\mathcal{M}$ satisfies 

\begin{align*} 
\mathcal{R}(\mathcal{X},\mathcal{Y})\nabla{f}=k\{2g(\varphi\mathcal{X},\mathcal{Y})-\eta(\mathcal{X})(\varphi\mathcal{Y}+\varphi{h}\mathcal{Y})+\eta(\mathcal{Y})(\varphi\mathcal{X}+\varphi{h}\mathcal{X})\}
\end{align*}\\for all smooth vector fields $\mathcal{X},\mathcal{Y}$ on $\mathcal{M}$, where $\nabla{f}=grad(f)$ for some real valued smooth function $f$ defined on $\mathcal{M}$.
\end{lem}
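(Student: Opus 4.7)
The natural plan is to convert the soliton equation into an explicit formula for the Hessian $\nabla\nabla f$, then differentiate once more to feed the result into the Ricci identity $\mathcal{R}(\mathcal{X},\mathcal{Y})\nabla f=\nabla_{\mathcal{X}}\nabla_{\mathcal{Y}}\nabla f-\nabla_{\mathcal{Y}}\nabla_{\mathcal{X}}\nabla f-\nabla_{[\mathcal{X},\mathcal{Y}]}\nabla f$. Concretely, I would substitute Lemma \ref{lem3.2} together with $r^{\ast}=-2mk$ from (\ref{3.2}) into the defining equation (\ref{1.5}) of the ($\ast$-CGE) soliton. After pairing with $\mathcal{Y}$, this yields
\begin{equation*}
g(\nabla_{\mathcal{X}}\nabla f,\mathcal{Y})=(k-\lambda)\,g(\mathcal{X},\mathcal{Y})-k\,\eta(\mathcal{X})\eta(\mathcal{Y}),
\end{equation*}
where I am abbreviating $\lambda:=\Omega+mk+\bigl(\tfrac{p}{2}+\tfrac{1}{2m+1}\bigr)$. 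Since the identity holds for all $\mathcal{Y}$, I can strip it off to obtain the pointwise identity $\nabla_{\mathcal{X}}\nabla f=(k-\lambda)\mathcal{X}-k\,\eta(\mathcal{X})\zeta$.

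Next I would covariantly differentiate this identity in the direction $\mathcal{Y}$, using (\ref{2.7}) for $\nabla_{\mathcal{Y}}\zeta$ and (\ref{2.11}) for $(\nabla_{\mathcal{Y}}\eta)\mathcal{X}$, taking care with the Leibniz rule for $\nabla_{\mathcal{Y}}(\eta(\mathcal{X})\zeta)$. After antisymmetrizing in $\mathcal{X},\mathcal{Y}$ and subtracting $\nabla_{[\mathcal{X},\mathcal{Y}]}\nabla f$, the terms proportional to $(k-\lambda)$ and the terms of the form $\eta(\nabla_{\cdot}\cdot)\zeta$ drop out by the torsion-free property $\nabla_{\mathcal{X}}\mathcal{Y}-\nabla_{\mathcal{Y}}\mathcal{X}=[\mathcal{X},\mathcal{Y}]$. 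What survives is
\begin{equation*}
\mathcal{R}(\mathcal{X},\mathcal{Y})\nabla f=k\Bigl\{\bigl[g(\mathcal{Y}+h\mathcal{Y},\varphi\mathcal{X})-g(\mathcal{X}+h\mathcal{X},\varphi\mathcal{Y})\bigr]\zeta+\eta(\mathcal{Y})(\varphi\mathcal{X}+\varphi h\mathcal{X})-\eta(\mathcal{X})(\varphi\mathcal{Y}+\varphi h\mathcal{Y})\Bigr\}.
\end{equation*}

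Finally I would simplify the coefficient of $\zeta$. Using the skew-symmetry (\ref{2.4}) of $\varphi$ the $h$-free part collapses to $2g(\varphi\mathcal{X},\mathcal{Y})$. The $h$-dependent part requires combining the symmetry of $h$, the relation $\varphi h+h\varphi=0$ in (\ref{2.6}) and (\ref{2.4}) once more: concretely $g(h\mathcal{Y},\varphi\mathcal{X})=g(\mathcal{Y},h\varphi\mathcal{X})=-g(\mathcal{Y},\varphi h\mathcal{X})=g(\varphi\mathcal{Y},h\mathcal{X})=g(h\mathcal{X},\varphi\mathcal{Y})$, so the two $h$-terms cancel. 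This yields the stated coefficient $2g(\varphi\mathcal{X},\mathcal{Y})\zeta$ (the $\zeta$ factor on the first summand of the claimed identity appears to be an unfortunate typographical omission), completing the proof.

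The only real obstacle is bookkeeping: there are a lot of terms in the second covariant derivative of the Hessian expression, and one must be patient to see the $(k-\lambda)$ pieces disappear by torsion-freeness and the $h$-terms cancel by the interlocking symmetry/antisymmetry properties of $h,\varphi,g$. No genuinely new geometric input is needed beyond (\ref{2.4}), (\ref{2.6}), (\ref{2.7}), (\ref{2.11}), Lemma \ref{lem3.2} and (\ref{3.2}).
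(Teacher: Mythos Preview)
Your proposal is correct and is essentially the paper's own argument: the paper likewise rewrites (\ref{1.5}) as $\nabla_{\mathcal{X}}\nabla f=-\lambda\mathcal{X}-Q^{\ast}\mathcal{X}$ (your $(k-\lambda)\mathcal{X}-k\eta(\mathcal{X})\zeta$ after expanding $Q^{\ast}$ via (\ref{3.1})), feeds this into the curvature identity to obtain $\mathcal{R}(\mathcal{X},\mathcal{Y})\nabla f=(\nabla_{\mathcal{Y}}Q^{\ast})\mathcal{X}-(\nabla_{\mathcal{X}}Q^{\ast})\mathcal{Y}$, and then computes these covariant derivatives using (\ref{2.7}) and (\ref{2.11}) exactly as you do. Your observation that the first summand in the stated formula is missing a factor of $\zeta$ is also correct; the same omission occurs in the paper's intermediate equations (\ref{4.8})--(\ref{4.9}).
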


\begin{proof} Let us assume that $(g,\nabla{f},\Omega)$ be a ($\ast$-CGE) soliton. Then the gradient soliton equation (\ref{1.5}) can be written as:

\begin{equation}\label{4.1}\nabla_{\mathcal{X}}\nabla{f}=-(\Omega+mk+\frac{p}{2}+\frac{1}{2m+1})\mathcal{X}-Q^{\ast}\mathcal{X}
\end{equation}\\for any smooth vector fields $\mathcal{X},\mathcal{Y}$ on $\mathcal{M}$.\\\\Executing the covariant derivative of (\ref{4.1}) along an arbitrary smooth vector field $\mathcal{Y}$ on $\mathcal{M}$ yields

\begin{equation}\label{4.2}\nabla_{\mathcal{Y}}\nabla_{\mathcal{X}}\nabla{f}=-(\Omega+mk+\frac{p}{2}+\frac{1}{2m+1})\nabla_{\mathcal{Y}}\mathcal{X}-\nabla_{\mathcal{Y}}Q^{\ast}\mathcal{X}. 
\end{equation}\\Exchanging the vector fields $\mathcal{X}$ and $\mathcal{Y}$ on $\mathcal{M}$ in the forgoing equation infers that

\begin{equation}\label{4.3}\nabla_{\mathcal{X}}\nabla_{\mathcal{Y}}\nabla{f}=-(\Omega+mk+\frac{p}{2}+\frac{1}{2m+1})\nabla_{\mathcal{X}}\mathcal{Y}-\nabla_{\mathcal{X}}Q^{\ast}\mathcal{Y}. 
\end{equation}\\Replacing $[\mathcal{X},\mathcal{Y}]$ instead of the vector field $\mathcal{X}$ on $\mathcal{M}$ in (\ref{4.1}) we get

\begin{equation}\label{4.4}\nabla_{[\mathcal{X},\mathcal{Y}]}\nabla{f}=(\Omega+mk+\frac{p}{2}+\frac{1}{2m+1})(\nabla_{\mathcal{Y}}\mathcal{X}-\nabla_{\mathcal{X}}\mathcal{Y})-Q^{\ast}(\nabla_{\mathcal{X}}\mathcal{Y}-\nabla_{\mathcal{Y}}\mathcal{X}).
\end{equation}\\Utilizing (\ref{4.2})-(\ref{4.4}) in the well known curvature formula  
$\mathcal{R}(\mathcal{X},\mathcal{Y})=[\nabla_{\mathcal{X}},\nabla_{\mathcal{Y}}]-\nabla_{[\mathcal{X},\mathcal{Y}]},$ where the square brackets $[\hspace{0.05cm} ]$ stands for Lie brackets, we obtain\begin{equation}\label{4.5}\mathcal{R}(\mathcal{X},\mathcal{Y})\nabla{f}= (\nabla_{\mathcal{Y}}Q^{\ast})\mathcal{X}-(\nabla_{\mathcal{X}}Q^{\ast})\mathcal{Y}.\end{equation}\\Also from the identity (\ref{3.1}) it can be written that 

\begin{equation}\label{4.6}
Q^{\ast}\mathcal{X}=k\eta(\mathcal{X})\zeta-k\mathcal{X}.
\end{equation}\\Taking covariant differentiation of (\ref{4.6}) along $\mathcal{Y}$ on $\mathcal{M}$ and using (\ref{2.7}) we obtain

\begin{equation}\label{4.7}\nabla_{\mathcal{Y}}Q^{\ast}\mathcal{X}=k\nabla_{\mathcal{Y}}\eta(\mathcal{X})\zeta+k\eta(\mathcal{X})\nabla_{\mathcal{Y}}\zeta-k\nabla_{\mathcal{Y}}\mathcal{X}.
\end{equation}\\Thus, we deduce from (\ref{4.6}) and (\ref{4.7}) that

\begin{equation}\label{4.8}
(\nabla_{\mathcal{Y}}Q^{\ast})\mathcal{X}=kg(\mathcal{Y}+h\mathcal{Y},\varphi\mathcal{X})-k\eta(\mathcal{X})(\varphi\mathcal{Y}+\varphi{h}\mathcal{Y}).
\end{equation}\\ Again exchanging the vector fields $\mathcal{X}$ and $\mathcal{Y}$ on $\mathcal{M}$ in (\ref{4.8}) yields

\begin{equation}\label{4.9}
(\nabla_{\mathcal{X}}Q^{\ast})\mathcal{Y}=kg(\mathcal{X}+h\mathcal{X},\varphi\mathcal{Y})-k\eta(\mathcal{Y})(\varphi\mathcal{X}+\varphi{h}\mathcal{X}).
\end{equation}\\By virtue of (\ref{4.8}) and (\ref{4.9}) from (4.5) completes the proof.
\end{proof}

Making use of the above lemma \ref{lem4.1} we can prove the following theorem:

\begin{thm}\label{thm4.2}
If a $(2m+1)$-dimensional $\mathcal{N}(k)$-contact metric manifold $\mathcal{M}$ admits a $\ast$-conformal Einstein gradient soliton $(g,\nabla{f},\Omega)$, then we have following:\\${\bf(i)}$ If $m>1$, then $\mathcal{M}$ is locally isometric to $\mathbb{E}^{m+1}(0)\times{\mathcal{S}^m(4)}$ and $\mathcal{M}$ is flat for $m=1$.\\${\bf(ii)}$ The manifold $\mathcal{M}$ is $\ast$-Ricci flat.\\${\bf(iii)}$ The function $f$ is either harmonic or satisfy a physical Poisson equation. 
\end{thm}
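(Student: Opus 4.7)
The plan is to invoke Lemma~\ref{lem4.1} at $\mathcal{Y}=\zeta$, compare the resulting expression with the one obtained from the $\mathcal{N}(k)$-structure formula~(\ref{2.10}), and then extract geometric consequences by forcing $k=0$.

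\textbf{Step 1 (reduce along $\zeta$).} First I would substitute $\mathcal{Y}=\zeta$ in Lemma~\ref{lem4.1}. Because $\varphi\zeta=0$, $h\zeta=0$, $\eta(\zeta)=1$ and $g(\varphi\mathcal{X},\zeta)=0$, the right-hand side collapses to $\mathcal{R}(\mathcal{X},\zeta)\nabla f = k(\varphi\mathcal{X}+\varphi h\mathcal{X})$. On the other hand, antisymmetry of $\mathcal{R}$ together with (\ref{2.10}) yields $\mathcal{R}(\mathcal{X},\zeta)\nabla f = -k\{(\mathcal{X}f)\zeta-(\zeta f)\mathcal{X}\}$. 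Equating the two produces the key identity
\begin{equation*}
k\bigl\{\varphi(\mathcal{X}+h\mathcal{X})+(\mathcal{X}f)\zeta-(\zeta f)\mathcal{X}\bigr\}=0.
\end{equation*}

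\textbf{Step 2 (force $k=0$).} Assume for contradiction that $k\neq 0$. Pairing the identity above with $\zeta$ and using $\eta\circ\varphi=0$ gives $\mathcal{X}f=(\zeta f)\eta(\mathcal{X})$, i.e.\ $\nabla f=(\zeta f)\zeta$. Substituting this back leaves
\begin{equation*}
\varphi(\mathcal{X}+h\mathcal{X})=(\zeta f)\bigl(\mathcal{X}-\eta(\mathcal{X})\zeta\bigr).
\end{equation*}
Now pick a unit horizontal eigenvector $\mathcal{X}$ of the symmetric tensor $h$ with eigenvalue $\lambda$; identity~(\ref{2.8}) forces $\lambda^{2}=1-k$. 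The displayed identity reduces to $(1+\lambda)\varphi\mathcal{X}=(\zeta f)\mathcal{X}$. Since $\varphi\mathcal{X}$ is nonzero and orthogonal to $\mathcal{X}$ on the contact distribution, the two sides can agree only if $1+\lambda=0$ and $\zeta f=0$. Then $\lambda=-1$ and $\lambda^{2}=1-k$ give $k=0$, contradicting our assumption. Hence $k=0$.

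\textbf{Step 3 (conclude (i)--(iii)).} With $k=0$, identity~(\ref{3.1}) gives $Ric^{\ast}=0$, which is (ii); identity~(\ref{2.9}) gives $\mathcal{R}(\mathcal{X},\mathcal{Y})\zeta=0$, so Lemma~\ref{lem3.1} delivers (i). For (iii), plugging $k=0$ and $Q^{\ast}=0$ into (\ref{4.1}) yields $\nabla_{\mathcal{X}}\nabla f=-\bigl\{\Omega+\frac{p}{2}+\frac{1}{2m+1}\bigr\}\mathcal{X}$. Tracing with respect to an orthonormal frame produces
\begin{equation*}
\Delta f=-(2m+1)\Bigl\{\Omega+\frac{p}{2}+\frac{1}{2m+1}\Bigr\},
\end{equation*}
which is a constant. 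If this constant is zero then $f$ is harmonic; otherwise $\Delta f$ equals a nonzero constant, giving the ``physical'' Poisson equation of the statement.

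\textbf{Main obstacle.} The technical heart is Step~2: converting the algebraic identity into $k=0$. The spectral analysis of $h$ built on $h^{2}=(k-1)\varphi^{2}$ is the cleanest device; without invoking horizontal eigenvectors one would have to decompose the identity into symmetric and antisymmetric parts under $\mathcal{X}\leftrightarrow\varphi\mathcal{X}$, which is markedly less transparent. Steps 1 and 3 are essentially bookkeeping with the $\mathcal{N}(k)$-structure identities already collected in Section~\ref{sec2}.
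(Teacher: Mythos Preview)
Your argument is correct and shares the paper's overall architecture: specialize Lemma~\ref{lem4.1} by inserting $\zeta$, compare with the structural identity~(\ref{2.10}), contract against $\zeta$ to obtain $\nabla f=(\zeta f)\zeta$ in the case $k\neq 0$, force $k=0$, and then read off (i)--(iii) from~(\ref{3.1}),~(\ref{2.9}), Lemma~\ref{lem3.1} and a trace of the soliton equation.

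The genuine difference lies in your Step~2. After reaching $\nabla f=(\zeta f)\zeta$, the paper does \emph{not} feed this back into the curvature identity; instead it differentiates $\nabla f=(\zeta f)\zeta$ using~(\ref{2.7}), inserts the result into the soliton equation~(\ref{4.1}), and compares the resulting expression for $Q^{\ast}\mathcal{X}$ with the explicit formula~(\ref{3.1}), matching the pieces along $\varphi\mathcal{X}$, $\zeta$ and $\mathcal{X}$ to obtain $\zeta f=0$, $\mathcal{X}(\zeta f)=-k\eta(\mathcal{X})$ and $\Omega+mk+\tfrac{p}{2}+\tfrac{1}{2m+1}=k$, whence $k=0$. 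Your route---substituting back into the Step~1 identity and testing on a horizontal $h$-eigenvector via $h^{2}=(k-1)\varphi^{2}$ and $g(\varphi\mathcal{X},\mathcal{X})=0$---is more self-contained: it never re-invokes the soliton equation and avoids the informal ``coefficient comparison''. (You can sharpen it slightly: since $h\varphi=-\varphi h$, the horizontal eigenvalues come in pairs $\pm\sqrt{1-k}$, so choosing $\lambda=\sqrt{1-k}\geq 0$ makes $1+\lambda>0$ impossible at once.) The paper's device, in return, produces the extra relation $\Omega+\tfrac{p}{2}+\tfrac{1}{2m+1}=0$ in that branch, which is why it states (iii) as an explicit dichotomy between the harmonic and Poisson cases; your Step~3 recovers the same dichotomy as the vanishing/non-vanishing of the traced constant.
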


\begin{proof} Let $(g,\nabla{f},\Omega)$ be a ($\ast$-CGE) soliton on $\mathcal{N}(k)$-contact metric manifold $\mathcal{M}$ of dimensional  $(2m+1)$. Then recalling the lemma \ref{lem4.1} and then replacing $\mathcal{X}$ by $\zeta$ we obtain 

\begin{equation}\label{4.12}\mathcal{R}(\zeta,\mathcal{Y})\nabla{f}=-k(\varphi\mathcal{Y}+\varphi{h}\mathcal{Y}),\end{equation}\\for any smooth vector fields $\mathcal{Y}$ on $\mathcal{M}$. Taking inner product of (\ref{4.12}) with arbitrary smooth vector field $\mathcal{W}$ on $\mathcal{M}$ we get 

\begin{equation}\label{4.13}g(\mathcal{R}(\zeta,\mathcal{Y})\nabla{f},\mathcal{W})=-k\{g(\varphi\mathcal{Y},\mathcal{W})+g(\varphi{h}\mathcal{Y},\mathcal{W})\}.
\end{equation}\\On the other hand, in view of (\ref{2.10}) and $g(\mathcal{R}(\zeta,\mathcal{Y})\nabla{f},\mathcal{W})+g(\mathcal{R}(\zeta,\mathcal{Y})\mathcal{W},\nabla{f})=0$, we deduce

\begin{equation}\label{4.14}g(\mathcal{R}(\zeta,\mathcal{Y})\nabla{f},\mathcal{W})=-k\{g(\mathcal{Y},\mathcal{W})\zeta(f)-\eta(\mathcal{W})\mathcal{Y}(f)\}.
\end{equation}\\Comparing the identities (\ref{4.13}) and (\ref{4.14}), we have

\begin{equation}\label{4.15}k\{g(\varphi\mathcal{Y},\mathcal{W})+g(\varphi{h}\mathcal{Y},\mathcal{W})-g(\mathcal{Y},\mathcal{W})\zeta(f)+\eta(\mathcal{W})\mathcal{Y}(f)\}=0.
\end{equation}\\Plugging $\mathcal{W}=\zeta$, the foregoing equation infers that

\begin{equation}\label{4.16}k\{\mathcal{Y}(f)-\eta(\mathcal{Y})\zeta(f)\}=0.
\end{equation}\\Since (\ref{4.16}) may be exhibited as $k\{g(\mathcal{Y},\nabla{f})-g(\mathcal{Y},(\zeta{f})\zeta)\}=0$, we get

\begin{equation}\label{4.17}k\{\nabla{f}-(\zeta{f})\zeta\}=0.\end{equation}\\Therefore, either $k=0$ or $\nabla{f}=(\zeta{f})\zeta$.\\

\quad {\bf Case ${\bf(i)}$}: For $k=0$, equation (\ref{3.1}) gives $Ric^{\ast}=0$, and hence the manifold $\mathcal{M}$ is $\ast$-Ricci flat. Also from (\ref{2.9}) we get $\mathcal{R}(\mathcal{X},\mathcal{Y})\zeta=0$ for any smooth vector fields $\mathcal{X}, \mathcal{Y}$ on $\mathcal{M}$. Therefore, from lemma \ref{lem3.1}, we conclude that $\mathcal{M}$ is locally isometric to $\mathbb{E}^{m+1}(0)\times{\mathcal{S}^m(4)}$ for $m>1$ and flat for $m=1$. On taking $Ric^{\ast}=r^{\ast}=0$ in equation (\ref{1.5}) and then tracing it we obtain $\Delta{f}=-(\Omega+\frac{p}{2}+\frac{1}{2m+1})({2m+1})$, $\Delta$ being the Laplace operator. This reflects that the function $f$ satisfies a Poisson equation.\\

\quad {\bf Case ${\bf(ii)}$}: For $k\neq{0}$, from (\ref{4.17}) we have $\nabla{f}=(\zeta{f})\zeta$. Now covariantly differentiating it along an arbitrary smooth vector field $\mathcal{X}$ on $\mathcal{M}$ and then using (\ref{2.7}) yields

\begin{equation}\label{4.18}\nabla_{\mathcal{X}}\nabla{f}=(\mathcal{X}(\zeta{f}))\zeta-(\zeta{f})(\varphi\mathcal{X}+\varphi{h}\mathcal{X}).\end{equation}\\Equating (\ref{4.18}) with (\ref{4.1}) yields

\begin{equation}\label{4.19}Q^{\ast}\mathcal{X}=-(\Omega+mk+\frac{p}{2}+\frac{1}{2m+1})\mathcal{X}-(\mathcal{X}(\zeta{f}))\zeta+(\zeta{f})(\varphi\mathcal{X}+\varphi{h}\mathcal{X}), 
\end{equation}\\Comparing the coefficients of the smooth vector fields $\varphi\mathcal{X},\hspace{0.1cm} \zeta$ and $\mathcal{X}$ from (\ref{4.19}) and (\ref{3.1}), we achieve the followings, respectively:
\begin{equation}\label{4.20}(\zeta{f})=0.\end{equation}
\begin{equation}\label{4.21}\mathcal{X}(\zeta{f})=-k\eta(\mathcal{X}).\end{equation}
\begin{equation}\label{4.22}\Omega+mk+\frac{p}{2}+\frac{1}{2m+1}=k.\end{equation}\\Making use of (\ref{4.20}) in (\ref{4.21}) we get $k=0$. This fact together with (\ref{4.22}) leads to $\Omega=-\frac{p}{2}-\frac{1}{2m+1}$. Using $k=0$ and by the same procedure as in {\bf case ${\bf(i)}$}, the manifold $\mathcal{M}$ is locally isometric to $\mathbb{E}^{m+1}(0)\times{\mathcal{S}^m(4)}$ for $m>1$ and flat for $m=1$ and $\mathcal{M}$ is $\ast$-Ricci flat. Again, substituting $Ric^{\ast}=0$ and $\Omega+\frac{p}{2}+\frac{1}{2m+1}=0$ in (\ref{1.5}) infers that $\nabla^2f=0\implies\Delta{f}=0,$ which eventually implies that the function $f$ is harmonic. This completes the proof.
\end{proof}


\section{Example} 
Here we give an example of a $\ast$-conformal Einstein soliton on a $3$-dimensional $\mathcal{N}(1-\delta^2)$-contact metric manifold $\mathcal{M}$ as constructed in \cite{7}. In this example we can calculate the components of $\ast$-Ricci tensor as follows

\begin{align*} Ric^{\ast}(\mathsf{e}_1,\mathsf{e}_1)=0,\hspace{0.5cm} Ric^{\ast}(\mathsf{e}_2,\mathsf{e}_2)=Ric^{\ast}(\mathsf{e}_3,\mathsf{e}_3)=-(1-\delta^2).
\end{align*} Therefore in view of the above values of $\ast$-Ricci tensor, we have \begin{align*} r^{\ast}=-2(1-\delta^2).\end{align*} Also we can easily calculate Lie derivative of $g$ along $\mathsf{e}_1$ as
\begin{align*}(\pounds_{\mathsf{e}_1}g)(\mathcal{X},\mathcal{Y})=0\hspace{0.5cm} \forall\hspace{0.2cm} \mathcal{X},\mathcal{Y}\in\{\mathsf{e}_i: i=1,2,3\}.\end{align*}

For $\delta=1$, the curvature tensor $\mathcal{R}$ vanishes and also $Ric^{\ast}=0$. Now tracing the equation (\ref{1.4}) we get $\Omega=-\frac{p}{2}-\frac{1}{3}.$ Thus for this value of $\Omega$ the data $(g,\mathsf{e}_1,\Omega)$ defines a $\ast$-conformal Einstein soliton on this $3$-dimensional $\mathcal{N}(0)$-contact metric manifold $\mathcal{M}$. Moreover we can easily see that $\mathsf{e}_1$ is a Killing vector field and hence the Theorem \ref{thm3.3} and also the Remark \ref{rem3.5} are verified.\\\\
Again if $\mathsf{e}_1=\nabla{f}$, for some real valued smooth function $f$ defined on $\mathcal{M}$, then  from (\ref{1.5}) and considering $\delta=1$ we obtain 
\begin{align*} \Delta(f)=-3(\Omega+\frac{p}{2}+\frac{1}{3}),\end{align*} which is a Poisson equation. Now if $\Omega=-\frac{p}{2}-\frac{1}{3}$ then we have \begin{align*} \Delta(f)=0,\end{align*} which implies the function $f$ is harmonic. Hence the Theorem \ref{thm4.2} is also verified.\\\\\\

{\textbf{Acknowledgements:} \\\\The first author Jhantu Das is thankful to the Council of Scientific and Industrial Research, India (File no: {\bf 09/1156(0012)/2018-EMR-I}) for their financial support in the form of Senior Research Fellowship. \\\\}

\end{document}